\documentclass[11pt]{amsart}
\usepackage{amsmath, amsthm, amssymb}

\newtheorem{theorem}{Theorem}

\newtheorem{lemma}{Lemma}

\def\L{L}
\def\R{{\mathbb R}}
\def\eps{\varepsilon}
\def\vrrr#1#2{\vrule height#1mm depth#2mm}
\def\vrr{\vrule height6mm depth4mm}
\def\celll#1#2#3#4{\vbox to6mm{\hbox to #1mm{\hss#4\hss\vrrr{#2}{#3}}\hrule\vss}}
\def\cel#1#2{\vbox to6mm{\hbox to #1mm{\hss#2\hss\vrr}\hrule\vss}}

\title[Uniqueness]{Uniqueness of singular self-similar solutions \\
     of a semilinear parabolic equation}
\author{Pavol Quittner}
\address{Department of Applied Mathematics and Statistics, Comenius University 
         Mlynsk\'a dolina, 84248 Bratislava, Slovakia}
\email{quittner@fmph.uniba.sk}

\begin{document}

\begin{abstract}
We study the uniqueness of singular radial (forward and backward)
self-similar positive solutions of the equation
$
u_t-\Delta u = u^p, \quad x\in\R^n,\ t>0,
$
where $p\geq(n+2)/(n-2)_+$.
\end{abstract}

\maketitle
\section{Introduction} \label{sec-intro}

In this paper we study the uniqueness of singular radial (forward and backward) 
self-similar positive solutions of the equation
\begin{equation} \label{Fuj}
u_t-\Delta u = u^p, \qquad x\in\R^n,\ t>0, 
\end{equation}
where $p\geq p_S:=(n+2)/(n-2)_+$.
More precisely, we are interested in unbounded positive self-similar
solutions of the equation
\begin{equation} \label{Fujsr}
U_t-U_{rr}-\frac{n-1}r U_r = U^p \qquad \hbox{for }r,t>0. 
\end{equation}
Such solutions are of the form 
$$U(r,t)=t^{-1/(p-1)}w(r/\sqrt{t})\quad\hbox{or}\quad
U(r,t)=(T-t)^{-1/(p-1)}w(r/\sqrt{T-t}),$$
where $w=w(r)$ is an unbounded positive solution of the equation
\begin{equation} \label{eqw}
 w''+\bigl(\frac{n-1}r+\frac r2\Bigr)w'+\frac1{p-1}w+w^p=0
\qquad \hbox{for }r>0,
\end{equation}
or
\begin{equation} \label{eqwb}
 w''+\bigl(\frac{n-1}r-\frac r2\Bigr)w'-\frac1{p-1}w+w^p=0
\qquad \hbox{for }r>0,
\end{equation}
respectively.
Notice that stationary solutions $U=U(r)$ of \eqref{Fujsr}
solve the equation 
\begin{equation} \label{eqv}
U_{rr}+\frac{n-1}r U_r+U^p=0 \qquad \hbox{for }r>0,
\end{equation}
and equations \eqref{eqw} and \eqref{eqwb}
can be seen as perturbations of \eqref{eqv}.

Assume $p>1$.
It is well known that \eqref{eqv} possesses (a continuum of)
bounded positive solutions if and only if $p\geq p_S$,
see \cite{Fow31,Wang93} or \cite{SPP}.
If $p>p_{sg}:=n/(n-2)_+$ then \eqref{eqv}
possesses the unbounded positive solution 
\begin{equation} \label{Ustar}
U_*(r):={\L}r^{-2/(p-1)}, \quad\hbox{where}\quad
{\L}^{p-1}:=\frac2{(p-1)^2}\bigl((n-2)p-n\bigr),
\end{equation}
and this is the only unbounded positive solution
if $p>p_S$, see \cite{SZ94}. On the other hand, if $p_{sg}<p\leq p_S$
then \eqref{eqv} possesses a continuum of unbounded
positive solutions, see \cite[Proposition 2.2]{CL99} and \cite{SZ94}.
These results can also be easily derived 
by using the transformation 
$$v(s)=r^{2/(p-1)}U(r), \qquad s=\log r, $$
and the corresponding phase plane analysis for $(v,v')$
(see \cite[Section 9]{SPP} if $p\geq p_S$).
The function $v$ solves the equation
$$ v''+\beta v'+v^p-\gamma v=0, \qquad s\in\R$$
where 
$$ \beta:=\frac1{p-1}((n-2)p-(n+2)),\qquad 
\gamma=\frac2{(p-1)^2}((n-2)p-n).  $$
Notice that $\gamma=L^{p-1}>0$ if $p>p_{sg}$,
while $\beta>0$ if and only if $p>p_S$.
If $p>p_S$ then bounded positive solutions of \eqref{eqv}
correspond to the trajectories in the phase plane
joining the equilibria $(0,0)$ and $(L,0)$, 
and the unbounded solution $U_*$ corresponds to the equilibrium $(L,0)$,
see \cite[Figures 5--7]{SPP}.
If $p_{sg}<p<p_S$ then  
one can use the transformation $s\mapsto -s$
(which changes $\beta$ to $-\beta$) to obtain similar
pictures as for $p>p_S$, but now the trajectories 
joining $(0,0)$ and $(L,0)$ correspond to unbounded positive 
solutions of \eqref{eqv}.
This shows a kind of duality between bounded and unbounded
solutions and the cases $p>p_S$ and $p_{sg}<p<p_S$.
This duality concerns not only the existence but
also the qualitative properties of solutions.
For example, if we set
$$ p_{JL} :=\begin{cases}\ +\infty & \hbox{ if }n\leq10, \\
        1+\frac4{n-4-2\sqrt{n-1}} & \hbox{ if }n>10,
          \end{cases}
\quad
p_{JL}^* :=\hbox{$1+\frac4{n-4+2\sqrt{n-1}}$} \ \ \hbox{ if }n>2, \\
$$
then the bounded solutions of \eqref{eqv} in the range $p>p_S$
intersect each other (and the singular solution $U_*$)
if and only if $p<p_{JL}$, and, similarly, the unbounded
solutions of \eqref{eqv} in the range $p\in(p_{sg},p_S)$ 
intersect each other (and the singular solution $U_*$)
if and only if $p>p_{JL}^*$.
(Notice that $p_{JL}>p_S$ and $p_{JL}^*\in(p_{sg},p_S)$ if $n>2$.)

Some of the features of \eqref{eqv} mentioned above
can also be expected for its perturbations \eqref{eqw} and \eqref{eqwb};
however a complete description of all bounded and unbounded
solutions of these equations is still missing
and their analysis is much more difficult.
Since these solutions 
play an important role in the study of the large-time
or blow-up behavior of solutions of the model problem \eqref{Fuj},
the list of available results and references is very long
and we recall just a few of them.

First let us mention that $U_*$ is a positive unbounded solution
of both \eqref{eqw} and \eqref{eqwb} if $p>p_{sg}$.
If $n>10$ and $p>p_{JL}$ then the linearization 
of \eqref{eqwb} at $U_*$ (in a suitable function space)
defines a self-adjoint operator $A_*$ with spectrum $\sigma(A_*)$
consisting of a countable sequence of eigenvalues (see \cite{HV94}),
and we set
$$p_L:=(n-4)/(n-10)=\sup\{p>p_{JL}: 0\in\sigma(A_*)\},$$
cf.~\cite{Mi09}. 
The couple $(p_L,U_*)$ is a bifurcation point for
positive solutions of \eqref{eqwb},
cf.~\cite[Remark 2.5]{Mi10} and the arguments in \cite{FM}.  

Equation \eqref{eqwb} has a unique bounded positive solution
$w\equiv (p-1)^{-1/(p-1)}$ if $p\leq p_S$
(see \cite{GK85}),
it has an infinite sequence of bounded positive solutions 
if $p_S<p<p_{JL}$ (see \cite{Lep88,BQ89}), 
it has at least one bounded positive non-constant solution
if $p_{JL}<p<p_{L}$ (see \cite{Lep90}), 
and it does not have bounded positive
non-constant solutions if $p\geq p_L$ 
(see \cite{Mi09,Mi10}). 
Other interesting properties of bounded positive solutions
of \eqref{eqwb} can be found in \cite{Matos,FM,FP}, for example.
The uniqueness of the unbounded positive solution $U_*$ 
of \eqref{eqwb} for $p>p_S$ 
has been proved in \cite[Theorem~1.2]{Mi10};
see also previous results 
in \cite[Lemma~4.9]{FMP} and \cite[Proposition~A.1]{MM04}. 
On the other hand, this uniqueness fails if $p\in(p_{sg},p_S)$,
see \cite{SY11}. If $p=p_S$ then a
nonexistence result for unbounded positive solutions
of \eqref{eqwb} belonging to an energy space
was obtained in \cite[Lemma~5.1]{Suz08}.  

Equation \eqref{eqw} possesses a continuum of bounded positive
solutions for all $p>p_F:=1+2/n$, but the exponents
$p_S$ and $p_{JL}$ are again critical in some sense,
see \cite{HW82,PTW86,Yan96,DH98,SW03,Nai06,Nai08,Nai12} and the references therein.
In particular, the set of such solutions is bounded in $L^\infty$
if and only if $p<p_S$.
Fix $p>p_F$.
Given a positive solution $w$ of \eqref{eqw},
the limit $\ell(w):=\lim_{r\to\infty} w(r)r^{2/(p-1)}$ exists and 
$$L^*:=\sup\{\ell(w):
 \hbox{$w$ is a bounded positive solution of \eqref{eqw}}\}\in(0,\infty).$$
The only result concerning unbounded positive solutions
(that we are aware of)
shows that if $p_F<p<p_{JL}$ then any unbounded positive solution
of \eqref{eqw} satisfies $\ell(w)\leq L^*$, see
\cite[Lemma 7.1]{Nai12}.

Let us emphasize that all the results on unbounded positive solutions
of \eqref{eqw} and \eqref{eqwb} mentioned above
were motivated by (and immediately applied in) 
the study of solutions \eqref{Fuj}. 
This is also the case of the following uniqueness theorem
which is the main result of this paper and which plays
an important role in the study of threshold solutions of \eqref{Fuj}
in \cite{Qthr}.

\begin{theorem} \label{thm-uniq}
Let $p\geq p_S$ 
and let $w$ be a positive unbounded solution of \eqref{eqw} or \eqref{eqwb}.
If $p=p_S$ assume also that 
the number of sign changes of $w-U_*$ is finite.
Then $w=U_*$.
\end{theorem}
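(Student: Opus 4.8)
The plan is to pass to the logarithmic variable used in the introduction for \eqref{eqv}. Writing $\alpha:=2/(p-1)$ and setting
$$ v(s):=r^{\alpha}w(r), \qquad s:=\log r, $$
a direct computation transforms both \eqref{eqw} and \eqref{eqwb} into the single non-autonomous equation
$$ v''+a(s)\,v'+v^p-\gamma v=0, \qquad s\in\R, \qquad a(s):=\beta\pm\tfrac12 e^{2s}, $$
with sign $+$ for \eqref{eqw} and $-$ for \eqref{eqwb}. The point of the computation is that the zeroth-order terms $\pm\frac1{p-1}w$ cancel exactly the $v$-contribution produced by $\pm\frac r2 w'$, because $\frac\alpha2=\frac1{p-1}$, so that the stationary part reproduces the equation $v''+\beta v'+v^p-\gamma v=0$ of the introduction and the whole perturbation collapses to the single term $\pm\frac12 e^{2s}v'$. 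In these variables $U_*$ becomes the constant $v\equiv L$ (recall $\gamma=L^{p-1}$), $w$ is positive precisely when $v>0$ on $\R$, and the sign changes of $w-U_*$ are the zeros of $z:=v-L$. The natural quantity is the energy $E(s):=\frac12(v')^2+F(v)$ with $F(v):=\frac{v^{p+1}}{p+1}-\frac\gamma2 v^2$, which satisfies $E'(s)=-a(s)(v')^2$; note also that $F(L)=\frac{1-p}{2(p+1)}L^{p+1}<0=F(0)$.

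First I would pin down the behaviour at the origin, i.e.\ as $s\to-\infty$. There $a(s)\to\beta$ and the perturbation $\pm\frac12 e^{2s}$ decays, so the equation is a lower-order perturbation of \eqref{eqv}; the classical local analysis of isolated singularities of $-\Delta U=U^p$ for $p>p_{sg}$ shows that a positive singular solution obeys either $v\to0$ (a removable singularity, $w$ bounded) or $v\to L$. Since $w$ is unbounded, the first alternative is impossible, whence $v(s)\to L$ and $v'(s)\to0$ as $s\to-\infty$, so that $E(-\infty)=F(L)$.

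For the forward equation \eqref{eqw} this already finishes the argument for every $p\ge p_S$. Indeed $a>0$, so $E$ is non-increasing and $E\le E(-\infty)=F(L)$. Since $F(v)\le E\le F(L)$ and $F(v)\to+\infty$ as $v\to\infty$, the solution $v$ stays in a compact subset of $(0,\infty)$; from $E'=-a(s)(v')^2$, $a(s)\ge\frac12 e^{2s}$, and the finiteness of $\int a(v')^2\,ds=E(-\infty)-E(+\infty)$, one gets $v'(s)\to0$, so $v$ converges to a zero of $F'$, namely $0$ or $L$, as $s\to+\infty$. The value $0$ is excluded because it would give $E(+\infty)=F(0)=0>F(L)$, contradicting monotonicity; hence $v\to L$ and $E(+\infty)=F(L)=E(-\infty)$. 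A non-increasing $E$ with equal limits is constant, so $v'\equiv0$ and $v\equiv L$, i.e.\ $w=U_*$.

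The backward equation \eqref{eqwb} is the hard case and the main obstacle. Now for $p>p_S$ the coefficient $a$ changes sign at $s_0=\frac12\log(2\beta)$, so that $E$ decreases on $(-\infty,s_0)$ and increases on $(s_0,\infty)$, and the energy argument no longer excludes a solution that is of $U_*$-type at the origin but decays ($v\to0$) at infinity. Instead I would study the zeros of $z=v-L$, which solves the linear equation $z''+a(s)z'+q(s)z=0$ with $q(s)=p\xi(s)^{p-1}-\gamma$ for some $\xi(s)$ between $v(s)$ and $L$, and run a Sturm/intersection-comparison argument against the trivial solution $z\equiv0$, using the asymptotics of $a$ and $q$ at $\pm\infty$. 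For $p>p_S$ the strictly positive limiting damping $\beta$ provides the rigidity needed to control the intersections and force $z\equiv0$, recovering the uniqueness of \cite{Mi10}. The genuinely delicate case is $p=p_S$: then $\beta=0$, the limiting stationary equation $v''+v^p-\gamma v=0$ is conservative with $(L,0)$ a centre, and the damping that otherwise prevents $z$ from oscillating is absent, so a priori $w-U_*$ could change sign infinitely often. This is exactly where the hypothesis that the number of sign changes of $w-U_*$ is finite enters: it supplies the missing control on the zeros of $z$, allowing the Sturm argument to be closed and forcing $z\equiv0$, i.e.\ $w=U_*$. Extracting enough from finitely many intersections to defeat the conservative, undamped limit is the principal difficulty.
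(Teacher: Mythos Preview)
Your change of variables and the energy $E(s)=\tfrac12(v')^2+F(v)$ are exactly the objects the paper works with (the paper keeps the variable $r$ and writes $c(r)=\tfrac{r^2}2 h'(r)^2+b(h(r))$ with $h=v/L$, which is your $E$ up to rescaling). The endgame you give for \eqref{eqw} --- once $v\to L$, $v'\to0$ at $s=-\infty$, monotonicity of $E$ forces $E\equiv F(L)=\min F$, hence $v'\equiv0$ --- is clean and would work. The trouble is upstream.

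The assertion ``by the classical local analysis of isolated singularities of $-\Delta U=U^p$, either $v\to0$ or $v\to L$ as $s\to-\infty$'' is precisely the step that cannot be imported: the equations \eqref{eqw}, \eqref{eqwb} are \emph{not} $-\Delta U=U^p$, and establishing this dichotomy for the perturbed equations is the bulk of the paper (Lemmas~\ref{lem-ub}--\ref{lem-v0cp}). More to the point, your own set-up shows why the citation fails when $p=p_S$: there $\beta=0$, so for \eqref{eqw} one has $a(s)=\tfrac12 e^{2s}\to0$ and the limit equation at $s=-\infty$ is the conservative system $v''+v^p-\gamma v=0$ with $(L,0)$ a \emph{center}. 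Nothing prevents $(v,v')$ from being asymptotic to a periodic orbit around $(L,0)$, i.e.\ $v$ oscillating about $L$ forever with $E(-\infty)>F(L)$; then your inequality $E\le E(-\infty)$ gives nothing. This is exactly why the hypothesis $z(w-U_*)<\infty$ is imposed, and it is used for \eqref{eqw} just as much as for \eqref{eqwb}: in the paper's proof of Lemma~\ref{lem-rdhr} the assumption $z(h-1)<\infty$ is invoked repeatedly to exclude oscillation near $r=0$ and force $h(r)\to0$ (equivalently $v\to0$). You place the hypothesis only in the backward case; that is where your forward argument actually breaks. For $p>p_S$ your sketch can be completed (the damping $a(s)\ge\beta>0$ near $-\infty$ yields $\int(v')^2<\infty$ and then $v'\to0$), but for $p=p_S$ it cannot, without the sign-change assumption.

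For \eqref{eqwb} your proposal remains a sketch. With $p=p_S$ one has $a(s)=-\tfrac12 e^{2s}<0$ everywhere, so $E$ is increasing and $E(-\infty)=F(L)=\min F$ no longer gives a contradiction; the paper does not run a Sturm comparison here but instead squeezes the energy identity: from $\int_0^R h'(r)^2 r^3\,dr=2(c(R)-c_0)\ge R^2 h'(R)^2$ and the a priori bound $|rh'(r)|\le C_h$ one bootstraps $|h'(R)|\le C_h R^k$ for all $k$, hence $h'\equiv0$. So even in the backward critical case the mechanism is an energy estimate, not intersection counting; the finite sign-change hypothesis enters earlier, again to pin down the behaviour at $r=0$.
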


Our proof of Theorem~\ref{thm-uniq} is a modification of the proof
of an analogous result for equation \eqref{eqv} in \cite{SZ94};
this modification is far from straightforward 
in the critical case $p=p_S$.

\section{Proof of Theorem~\ref{thm-uniq}} \label{sec4}

\begin{lemma} \label{lem-ub}
Let $n>2$, $p>1$, $\alpha:=2/(p-1)$, and
let $w$ be a positive solution of \eqref{eqw}.
Then $w'\leq0$  and  there exists $C>0$ such that
\begin{equation} \label{boundw}
r^{\alpha+i}|w^{(i)}(r)|\leq C,\quad r\in(0,1),\quad i=0,1,2, 
\end{equation}
where $w^{(0)}=w$, $w^{(1)}=w'$ and $w^{(2)}=w''$.
If $w$ is bounded then $w'(r)=O(r)$ as $r\to0$.
\end{lemma}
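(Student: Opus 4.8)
The plan is to prove the three assertions in order: first the monotonicity $w'\le0$, then the sharp singular bound (the case $i=0$ of \eqref{boundw}), and finally to read off the bounds on $w'$ and $w''$, as well as the refined behaviour in the bounded case, from the integrated form of the equation.

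For the monotonicity I would rewrite \eqref{eqw} in divergence form via the integrating factor $\mu(r):=r^{n-1}e^{r^2/4}$, so that $(\mu w')'=-\mu\bigl(\tfrac1{p-1}w+w^p\bigr)<0$ since $w>0$; hence $\mu w'$ is strictly decreasing. If $w'(r_0)>0$ for some $r_0$, then $\mu(r)w'(r)>\mu(r_0)w'(r_0)>0$ for every $r<r_0$, which forces $w'(r)\ge c\,r^{-(n-1)}$ near the origin. As $n>2$, the function $r^{-(n-1)}$ is not integrable at $0$, so $w(r)\to-\infty$ as $r\to0$, contradicting positivity. Thus $w'\le0$ on $(0,\infty)$. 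I also record here the universal bound $w(r)\le Cr^{-(n-2)}$ for $r\in(0,1)$, obtained by integrating $|w'(r)|\le c_0/\mu(r)\le c_0 r^{-(n-1)}$ (the limit $\lim_{r\to0}\mu w'$ being finite since $\mu w'$ is monotone and nonpositive). When $p\le p_{sg}$ one has $n-2\le\alpha$, and this bound, inserted into the equation, already yields \eqref{boundw} for $i=0,1,2$ with room to spare; so the substance is the range $p>p_{sg}$.

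The heart of the matter, and the main obstacle, is the sharp bound $r^\alpha w(r)\le C$ for $p>p_{sg}$, since naive integral/monotonicity estimates only give the weaker rates $r^{-(n-2)}$ and $r^{-n/p}$ (both larger than $r^{-\alpha}$ here), $n/p$ being an integrability threshold such estimates cannot cross. To capture the scaling rate I would argue by rescaling. Suppose $r^\alpha w$ is unbounded as $r\to0$. Writing $M(r):=w(r)^{(p-1)/2}$, so that $r^\alpha w=(rM)^\alpha$, unboundedness means $rM(r)\to\infty$ along some sequence tending to $0$ (note $M$ is bounded on $[\delta,\infty)$ since $w$ is decreasing, so these points accumulate at $0$). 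By the doubling lemma of Pol\'a\v{c}ik, Quittner and Souplet I select $r_k\to0$ with $r_kM(r_k)\to\infty$ and $M\le2M(r_k)$ on $|r-r_k|\le k/M(r_k)$. Setting $\lambda_k:=1/M(r_k)=w(r_k)^{-(p-1)/2}$ and $w_k(y):=\lambda_k^\alpha w(r_k+\lambda_k y)$, the identity $\alpha(p-1)=2$ gives $w_k(0)=1$ and $w_k\le2^\alpha$ on $|y|\le k$, while $\lambda_k/r_k=1/(r_kM(r_k))\to0$. Substituting into \eqref{eqw} and multiplying by $\lambda_k^{\alpha+2}$, and using $\alpha p=\alpha+2$, the coefficient of $w_k'$ equals $(n-1)\lambda_k/(r_k+\lambda_k y)+\tfrac12\lambda_k(r_k+\lambda_k y)$ and the zeroth-order coefficient is $\lambda_k^2/(p-1)$; all of these tend to $0$ uniformly on compact $y$-sets because $\lambda_k/r_k\to0$ and $\lambda_k,r_k\to0$. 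Standard ODE compactness then yields a subsequence converging in $C^2_{\loc}(\R)$ to $W$ with $W(0)=1$, $0\le W\le2^\alpha$, solving $W''+W^p=0$ on $\R$. But $W''=-W^p\le0$ makes $W$ concave and bounded on all of $\R$, hence constant, hence $\equiv0$ — contradicting $W(0)=1$. This proves $r^\alpha w\le C$, and the decisive feature is that blowing up at points $r_k$ with $\lambda_k\ll r_k$ annihilates the curvature term $\frac{n-1}r w'$, so the limit is the one-dimensional equation, which has no positive entire solution for any $p>1$.

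The remaining assertions are then routine. Integrating $(\mu w')'=-\mu\bigl(\tfrac1{p-1}w+w^p\bigr)$ and using $r^\alpha w\le C$ one first checks $\lim_{r\to0}\mu(r)w'(r)=0$: otherwise $w'\sim-c\,r^{-(n-1)}$ and $w\sim c'r^{-(n-2)}$, which for $p>p_{sg}$ (so $\alpha<n-2$) contradicts $r^\alpha w\le C$. Hence $-w'(r)=\mu(r)^{-1}\int_0^r\mu(s)\bigl(\tfrac1{p-1}w+w^p\bigr)\,ds$, and inserting $w\le Cr^{-\alpha}$ together with $\alpha p=\alpha+2$ gives $\int_0^r\mu(s)w^p\,ds\le C\int_0^r s^{n-3-\alpha}\,ds\le Cr^{n-2-\alpha}$, so that $|w'(r)|\le Cr^{-\alpha-1}$, i.e. the case $i=1$. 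Solving \eqref{eqw} for $w''$ and bounding each of the terms $\frac{n-1}r|w'|$, $\frac r2|w'|$, $\frac1{p-1}w$ and $w^p$ by $Cr^{-\alpha-2}$ yields the case $i=2$. Finally, if $w$ is bounded then $\int_0^r\mu(s)\bigl(\tfrac1{p-1}w+w^p\bigr)\,ds\le Cr^n$, whence $|w'(r)|\le C\mu(r)^{-1}r^n\le Cr$, i.e. $w'(r)=O(r)$ as $r\to0$, completing the proof of Lemma~\ref{lem-ub}.
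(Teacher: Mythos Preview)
Your proof is correct and follows essentially the same strategy as the paper: monotonicity via the integrating factor $\mu=r^{n-1}e^{r^2/4}$, the sharp bound $r^\alpha w\le C$ via the doubling lemma and a blow-up to the one-dimensional Liouville equation $W''+W^p=0$, and the bounded case by integrating $(\mu w')'$. The only organizational difference is that the paper builds the derivative terms into the doubling function $M$ (so that \eqref{boundw} for $i=0,1,2$ comes out simultaneously), whereas you take the simpler $M=w^{(p-1)/2}$, recover $i=1,2$ afterwards by integrating the equation, and invoke interior ODE estimates to get the compactness of the rescaled sequence; both variants are standard and lead to the same Liouville contradiction.
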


\begin{proof}
Assume on the contrary $w'(r_0)>0$ for some $r_0>0$. 
Then \eqref{eqw} guarantees $w''(r)<0$ and $w'(r)>w'(r_0)$ for
$r<r_0$, and the inequality $(r^{n-1}w'(r))'\leq0$ for $r<r_0$
guarantees $w'(r)\geq c_0r^{1-n}$ for $r<r_0$
which contradicts
the estimate $\int_\rho^{r_0}w'(r)\,dr\leq w(r_0)$ for $\rho$
small enough.

Estimates \eqref{boundw} follow from
the scaling and doubling arguments as in \cite{PQS1}.
In fact, assume on the contrary that the function
$$ M(r):=w(r)^{(p-1)/2}+|w'(r)|^{(p-1)/(p+1)}+|w''(r)|r^{(p-1)/(2p)}$$
satisfies $M_k:=M(r_k)>2k/r_k$ for some $r_k\to0$.
Then \cite[Lemma~5.1]{PQS1} guarantees that we may assume
$M(r)\leq 2M(r_k)$ whenever $|r-r_k|<k/M_k$.
Set
$v_k(\rho):=\lambda_k^{2/(p-1)}w(r_k+\lambda_k\rho)$
where $\lambda_k=1/M_k$. Then 
$v_k,v_k',v_k''$ are uniformly bounded on $(-k,k)$
and $v_k$ solve the equation
$$ v_k''+\Bigl(\frac{n-1}{r_k/\lambda_k+\rho}
   +\frac12\lambda_k(r_k+\lambda_k\rho)\Bigr)v_k'
   +\frac1{p-1}\lambda_k^2v_k+v_k^p=0, \quad \rho\in(-k,k), $$
where $r_k/\lambda_k=r_kM_k>2k$.
Consequently, a suitable subsequence of
$\{v_k\}$ converges locally uniformly
to a positive solution $v$ of the equation $v''+v^p=0$ in $\R$
which yields a contradiction with the corresponding
Liouville theorem (see \cite[Theorem~8.1]{SPP}, for example).

Alternatively, one can also modify the arguments
in the proofs of \cite[Theorem 2.1, Lemma 2.1 and the subsequent Remark]{NS86} 
to prove \eqref{boundw}.

Next assume that $w$ is bounded.
Then there exist $r_k\to0$ such that $r_k^{n-1}w'(r_k)\to0$.
Consequently, 
integrating the inequality
$$ -(r^{n-1}w'(r))'\leq r^{n-1}\Bigl(\frac1{p-1}w+w^p\Bigr)=O(r^{n-1}) $$
from $r_k$ to $r$ and passing to the limit yields
$ -r^{n-1}w'(r) = O(r^n)$, 
hence $w'(r)=O(r)$.
\end{proof}

\begin{lemma} \label{lem-v0}
Let $p\geq p_S$, let $w$ be a positive solution of \eqref{eqw}
and $v(r):=w(r)r^{\alpha}$, where $\alpha:=2/(p-1)$.
If $v(r)\to0$ and $rv'(r)\to0$ as $r\to0$
then $w$ is bounded.
\end{lemma}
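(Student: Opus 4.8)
The plan is to pass to the logarithmic variable and recognise \eqref{eqw} as an exponentially small, higher-order perturbation of an autonomous saddle. Writing $s=\log r$ and keeping the notation $v(r)=r^\alpha w(r)$, a direct computation (using $\alpha p=\alpha+2$ and $\tfrac1{p-1}=\tfrac\alpha2$, the latter being exactly what cancels the would-be zeroth-order perturbation proportional to $r^2 v$) transforms \eqref{eqw} into
\[
 \ddot v+\Bigl(\beta+\tfrac12 e^{2s}\Bigr)\dot v-\gamma v+v^p=0,
\]
where dots denote $d/ds$ and $\beta,\gamma$ are the constants from the analysis of \eqref{eqv}. Since $rv'=\dot v$, the hypotheses $v(r)\to0$ and $rv'(r)\to0$ as $r\to0$ read $(v,\dot v)\to(0,0)$ as $s\to-\infty$, and, as $w'\le0$ by Lemma~\ref{lem-ub}, the conclusion that $w$ is bounded is equivalent to $v(s)=O(e^{\alpha s})$ as $s\to-\infty$.

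Next I would study the autonomous limit $\ddot v+\beta\dot v-\gamma v+v^p=0$ obtained as $s\to-\infty$. Its linearisation at the origin has characteristic equation $\mu^2+\beta\mu-\gamma=0$; because $\gamma>0$ (here $p>p_{sg}$) the origin is a hyperbolic saddle, and the one-line identity $\alpha^2+\beta\alpha-\gamma=0$ shows that $\alpha$ is the positive root, so $\mu_+=\alpha$ and $\mu_-=-(\alpha+\beta)<0$. Thus the linear modes near the origin are $e^{\alpha s}$ (decaying as $s\to-\infty$) and $e^{-(\alpha+\beta)s}$ (growing), and any trajectory tending to the origin as $s\to-\infty$ must lie on the unstable direction, hence decay at the rate $e^{\alpha s}$.

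The heart of the matter is making this quantitative by asymptotic integration. I would use variation of parameters against the fundamental solutions $e^{\mu_\pm s}$, integrating the forcing $-v^p-\tfrac12 e^{2s}\dot v$ from $-\infty$ along the stable mode and handling the unstable mode with its free homogeneous constant, and then bootstrap the decay rate: if $|(v,\dot v)|=O(e^{\kappa s})$ for some $\kappa\in(0,\alpha)$, the forcing is $O(e^{\kappa' s})$ with $\kappa'=\min(p\kappa,\kappa+2)>\kappa$, which upgrades the bound to $O(e^{\kappa' s})$. Iterating reaches some $\kappa<\alpha$ with $\min(p\kappa,\kappa+2)>\alpha$, at which step the homogeneous $e^{\alpha s}$ term dominates and yields $v(s)=O(e^{\alpha s})$. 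To start the iteration one needs an initial exponential rate, which follows from robustness of the exponential dichotomy (equivalently the local unstable manifold theorem) once $|(v,\dot v)|$ is small.

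The main obstacle is exactly this last step: controlling the exponentially small but genuinely present non-autonomous term $\tfrac12 e^{2s}\dot v$ together with the nonlinearity, and in particular ruling out a resonant logarithmic correction at the critical rate $e^{\alpha s}$. The resonance is avoided because the forcing acts only at the strictly faster rates $e^{p\alpha s}$ and $e^{(\alpha+2)s}$, so the terminal bootstrap step produces $v=O(e^{\alpha s})$ with no logarithmic loss; feeding $v(s)=O(e^{\alpha s})$ back gives $w=r^{-\alpha}v=e^{-\alpha s}v=O(1)$, i.e.\ $w$ is bounded.
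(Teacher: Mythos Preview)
Your approach is correct and is a genuinely different route from the paper's. Your change of variables $s=\log r$ is accurate (the cancellation of the would-be $e^{2s}v$ term via $\tfrac1{p-1}=\tfrac\alpha2$ is exactly right), the identification of $\mu_+=\alpha$, $\mu_-=-(\alpha+\beta)$ is correct, and the bootstrap via variation of parameters with forcing rates $\min(p\kappa,\kappa+2)$ terminates at $O(e^{\alpha s})$ without a logarithmic loss, since $p\alpha>\alpha$ and $\alpha+2>\alpha$.

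The paper, following Serrin--Zou \cite{SZ94}, stays in the $r$ variable and avoids the dynamical-systems machinery entirely. It perturbs the linear part by $\pm\eps$ and \emph{factors} the resulting second-order operator explicitly as $\bigl(D-\tfrac{a_2^\pm-1}{r}\bigr)\bigl(D-\tfrac{a_1^\pm}{r}\bigr)$ with $a_1^\pm=\alpha+O(\eps)$, $a_2^\pm=\alpha+2-n+O(\eps)$. Integrating the first-order inequality from $0$ (using $v\to0$, $rv'\to0$) gives in a single stroke the two-sided bound $\tilde c_\eps r^{\alpha+O(\eps)}\le v(r)\le c_\eps r^{\alpha+O(\eps)}$ \emph{and} the matching bound on $rv'$. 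This replaces your appeal to ``robustness of exponential dichotomies / unstable manifold theorem'' by an explicit, elementary computation---that step in your outline is the one that would require the most care to make precise (a clean way is to augment with $\dot\tau=2\tau$, $\tau=e^{2s}$, and invoke the unstable manifold at the hyperbolic origin, but one must still read off the decay of the $(v,\dot v)$-components). After the near-optimal rate is in hand, both arguments finish the same way: one application of the representation formula for $v''+\tfrac{n-1-2\alpha}{r}v'-\tfrac{\gamma}{r^2}v=g$ with $|g|\le Cr^{\alpha+O(\eps)}$ yields $v=O(r^\alpha)$. In short, your argument is more conceptual and portable; the paper's is more self-contained and gets the initial exponential rate by a direct one-line factorization rather than by invoking an abstract theorem.
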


\begin{proof}
The proof is a modification of the proof of 
\cite[Lemma 2.4]{SZ94}.

Lemma~\ref{lem-ub}
guarantees that $w$ is bounded on $[1,\infty)$
and $v$ satisfies
\begin{equation} \label{estv}
 v(r)+|v'(r)r|+|v''(r)r^2|\leq C \quad\hbox{for }r\in(0,1).
\end{equation}
Notice also that $v$ is a solution of
\begin{equation} \label{eqv1}
v''+\Bigl(\frac{n-1-2\alpha}r+\frac r2\Bigr)v'+\frac1{r^2}(v^p-{\L}^{p-1}v)=0,
\qquad r>0,
\end{equation}
where ${\L}$ is defined in \eqref{Ustar}.
Consider $\eps>0$ small and set
$$ \begin{aligned}
 a_1^\pm &:=\alpha-\frac12(n-2-\sqrt{(n-2)^2\mp4\eps})=\alpha+O(\eps),\\
 a_2^\pm &:=\alpha-\frac12(n-2+\sqrt{(n-2)^2\mp4\eps})=\alpha+2-n+O(\eps)<0.
\end{aligned}$$
Choose $r_\eps>0$ small such that
$v^{p-1}(r)+(1-a_2^-/2)r^2<\eps$ for $r<r_\eps$.
Then \eqref{eqv1} guarantees
\begin{equation} \label{ineqv1}
\frac r2 v'+\frac{2-a_2^+}2v
+v''+\frac{n-1-2\alpha}r v'-\frac{({\L}^{p-1}-\eps)}{r^2} v\geq0,
\qquad 0<r<r_\eps,
\end{equation}
and
\begin{equation} \label{ineqv2}
\frac r2 v'+\frac{2-a_2^-}2v    
+v''+\frac{n-1-2\alpha}r v'-\frac{({\L}^{p-1}+\eps)}{r^2} v\leq0,
\qquad 0<r<r_\eps. 
\end{equation}
Inequality \eqref{ineqv1} can be written as
$$ \frac r2 v'+\frac{2-a_2^+}2v+\Bigl(D-\frac{a_2^+-1}r\Bigr)\Bigl(D-\frac{a_1^+}r\Bigr)v\geq0,$$
where $D=d/dr$.
Multiplying the last inequality by $r^{1-a_2^+}$ we see that the function
$$ \psi(r):=\frac12 r^{2-a_2^+}v+r^{1-a_2^+}\Bigl(D-\frac{a_1^+}r\Bigr)v $$
satisfies $\psi'\geq0$. Since $\psi(r)\to0$ as $r\to0$,
we have $\psi(r)\geq0$, hence
$$ \frac12 rv+\Bigl(D-\frac{a_1^+}r\Bigr)v \geq0, \qquad 0<r<r_\eps.$$
Since $r_\eps^2<\eps$, we also have
$$ \Bigl(D-\frac{\tilde a_1}r\Bigr)v \geq0, \qquad 0<r<r_\eps,$$
where $\tilde a_1:=a_1^+-\eps$.
Integrating from $r$ to $r_\eps$ yields
$$ v(r)\leq c_\eps r^{\tilde a_1}=c_\eps r^{\alpha+O(\eps)},
\qquad  0<r<r_\eps.$$
Similarly, considering \eqref{ineqv2} instead of \eqref{ineqv1},
we obtain
$$  \Bigl(D-\frac{a_1^-}r\Bigr)v \leq0\quad\hbox{and}\quad
v(r)\geq \tilde c_\eps r^{a_1^-}=\tilde c_\eps r^{\alpha+O(\eps)},
\qquad  0<r<r_\eps.$$
Thus $\tilde a_1v(r)\leq rv'(r)\leq a_1^- v(r)$ for $0<r<r_\eps$,
and
\begin{equation} \label{eqv2}
 v''+\frac{n-1-2\alpha}r v'-\frac{{\L}^{p-1}}{r^2}v=g(r)
\end{equation}
with
$$ |g(r)|=\Big|\frac{v^p}{r^2}+\frac r2 v'\Big|\leq Cr^{\alpha+O(\eps)}.$$
Now the representation of solutions of \eqref{eqv2} 
implies that $v$ is bounded by $Cr^\alpha$, hence $w$ is bounded:
\end{proof}

\begin{lemma} \label{lem-rdhr}
Let $p\geq p_S$, let $w$ be a positive solution of \eqref{eqw},
$v(r)=r^\alpha w(r)$, where $\alpha=2/(p-1)$.
If $p=p_S$ assume also $z(v-{\L})<\infty$.

Then
\begin{equation} \label{rdhr}
rv'(r)\to0\ \hbox{ as }\ r\to0.
\end{equation}
If $p=p_S$ and $v\not\equiv {\L}$ then $v(r)\to0$ as $r\to0$.
\end{lemma}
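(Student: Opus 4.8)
The plan is to remove the singularity at $r=0$ by the logarithmic substitution $s=\log r$, $V(s):=v(e^{s})$, under which $rv'(r)=\dot V(s)$ (dots denote $d/ds$, since $\dot V=v'(e^{s})e^{s}=rv'(r)$) and, by a direct computation, \eqref{eqv1} becomes
\begin{equation} \label{plan-ode}
 \ddot V+\Bigl(\beta+\tfrac12 e^{2s}\Bigr)\dot V+V^p-\L^{p-1}V=0,\qquad \beta:=n-2-2\alpha=\tfrac1{p-1}\bigl((n-2)p-(n+2)\bigr).
\end{equation}
Since $p\ge p_S$ we have $\beta\ge0$, and the non-autonomous coefficient $\tfrac12 e^{2s}\to0$ as $s\to-\infty$, so \eqref{plan-ode} is an asymptotically autonomous perturbation of $\ddot V+\beta\dot V+V^p-\L^{p-1}V=0$. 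By Lemma~\ref{lem-ub} and \eqref{estv} the functions $V,\dot V,\ddot V$ are bounded on $(-\infty,0]$, so the orbit $(V,\dot V)$ is bounded, and proving \eqref{rdhr} amounts to showing $\dot V\to0$ as $s\to-\infty$. The natural tool is the energy
\[
 E(s):=\tfrac12\dot V^2+\tfrac1{p+1}V^{p+1}-\tfrac{\L^{p-1}}2V^2,
\]
which along \eqref{plan-ode} satisfies $\dot E=-(\beta+\tfrac12 e^{2s})\dot V^2\le0$. Thus $E$ is non-increasing and bounded, hence $E(s)\to E_{-\infty}$ as $s\to-\infty$, and $\int_{-\infty}^0(\beta+\tfrac12 e^{2s})\dot V^2\,ds=E_{-\infty}-E(0)<\infty$.

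When $p>p_S$ we have $\beta>0$, so the last integral estimate gives $\int_{-\infty}^0\dot V^2\,ds<\infty$; as $\dot V,\ddot V$ are bounded, $\dot V^2$ is Lipschitz and hence uniformly continuous, and Barbalat's lemma forces $\dot V\to0$, i.e.\ $rv'(r)\to0$. No information on the sign of $v-\L$ is needed here, consistent with the hypotheses, and we do not claim $v\to0$ in this range.

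The genuinely delicate case is $p=p_S$, where $\beta=0$ and the limiting equation $\ddot V+V^p-\L^{p-1}V=0$ is \emph{conservative}; now $E$ is asymptotically conserved and the dissipation $\tfrac12 e^{2s}\dot V^2$ is too weak to force $\dot V\to0$ by itself (a periodic orbit would have $E_{-\infty}$ finite but $\dot V\not\to0$). This is exactly where $z(v-\L)<\infty$ enters. I would invoke the theory of asymptotically autonomous systems (after reversing $s$): the $\alpha$-limit set $\Omega$ of the bounded orbit as $s\to-\infty$ is a non-empty invariant set of the conservative limit flow, contained in the level set $\{E=E_{-\infty}\}$. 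The potential $W(V):=\tfrac1{p+1}V^{p+1}-\tfrac{\L^{p-1}}2V^2$ has a strict local maximum at the saddle $(0,0)$ and a strict local minimum at the center $(\L,0)$, with $W(0)=0>W(\L)$. Because $z(v-\L)<\infty$, the sign of $V-\L$ is constant for $s$ near $-\infty$, so $\Omega\subseteq\{V\ge\L\}$ or $\Omega\subseteq\{V\le\L\}$.

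It then remains to classify the bounded invariant sets lying on one side of $V=\L$: the bounded orbits of the conservative flow other than the equilibria are the periodic orbits encircling the center and the homoclinic loop through the saddle, and each of these meets both $\{V<\L\}$ and $\{V>\L\}$; hence the only bounded invariant sets contained in $\{V\le\L\}$ or in $\{V\ge\L\}$ are $\{(0,0)\}$ and $\{(\L,0)\}$. Since these two equilibria have different energies and $\Omega\subseteq\{E=E_{-\infty}\}$, the set $\Omega$ is a single one of them. If $\Omega=\{(\L,0)\}$ then $E_{-\infty}=W(\L)$, the global minimum of $E$; as $E$ is non-increasing with $\sup E=E_{-\infty}$, this forces $E\equiv W(\L)$, whence $\dot V\equiv0$ and $V\equiv\L$, i.e.\ $v\equiv\L$. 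Thus for $v\not\equiv\L$ we must have $\Omega=\{(0,0)\}$, so $(V,\dot V)\to(0,0)$, giving both $v(r)\to0$ and $rv'(r)\to0$ as $r\to0$; and if $v\equiv\L$ then \eqref{rdhr} is trivial. The hard part is precisely this $p=p_S$ step: rigorously establishing the invariance of $\Omega$ under the conservative limit flow despite the vanishing perturbation, and thereby excluding that the trajectory shadows the center or a periodic orbit—which is exactly what the finite-sign-change hypothesis prevents. (Once $v\to0$ and $rv'\to0$ are known, Lemma~\ref{lem-v0} will apply.)
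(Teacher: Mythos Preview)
Your argument is correct. In the logarithmic variable $s=\log r$ your energy $E$ is (up to scaling by $\L$) exactly the paper's quantity $c(r)$, and for $p>p_S$ both proofs are the same computation: the identity $\dot E=-(\beta+\tfrac12e^{2s})\dot V^2$ with $\beta>0$ gives $\int_{-\infty}^0\dot V^2\,ds<\infty$ (the paper writes this as $\int v'^2r\,dr<\infty$, which is literally the same integral), and then boundedness of $\ddot V$ forces $\dot V\to0$. The paper does this last step by a direct covering argument instead of invoking Barbalat, but the content is identical.

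For $p=p_S$ the two proofs diverge in style. The paper stays elementary: it first proves $r(rh')'\to0$ (equivalently $\ddot V\to0$) by a careful sequence argument---choosing $r_k$ where $\ddot V$ is small, $\tilde r_k$ where it is not, and extracting critical points $\hat r_k$---then uses $z(v-\L)<\infty$ and the monotonicity of $c$ to rule out $h\to1$ and conclude $h\to0$, $rh'\to0$. Your route is to time-reverse, invoke the Markus/Thieme theory of asymptotically autonomous planar systems to get that the $\alpha$-limit set $\Omega$ is compact, connected, contained in $\{E=E_{-\infty}\}$, and invariant for the conservative limit flow, and then observe that the finite-sign-change hypothesis confines $\Omega$ to one side of $\{V=\L\}$, where the only bounded invariant sets are the two equilibria; the energy argument excluding $(\L,0)$ unless $v\equiv\L$ is then the same as the paper's. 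Your approach is cleaner and more conceptual, at the cost of importing an outside theorem; the paper's is self-contained but requires the somewhat intricate case analysis with the three sequences. Both hinge on the same two ingredients---monotone energy with global minimum at $(\L,0)$, and the sign constraint from $z(v-\L)<\infty$---so neither gains real generality over the other.
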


\begin{proof}
Notice that 
$v$ is a solution of \eqref{eqv1} satisfying \eqref{estv}.
Set $h:=v/{\L}$.  

First assume $p=p_S$.
Then $h$ is a positive solution of
\begin{equation} \label{eqh}
h''+\Bigl(\frac1r+\frac r2\Bigr)h'+\frac{\alpha^2}{r^2}(h^p-h)=0,\qquad r>0,
\end{equation}
and satisfies
\begin{equation} \label{esth}
 h(r)+r|h'(r)|+r^2|h''(r)|\leq C,\qquad r\in(0,1).
\end{equation}
Assume $h\not\equiv1$, hence $h'\not\equiv0$.
Multiplying \eqref{eqh} by $r^2$ we obtain
\begin{equation} \label{rrh}
r(rh')'+\frac{r^3}2h'+d(h(r))=0,
\quad\hbox{where }\ d(\xi):=\alpha^2(\xi^p-\xi).
\end{equation} 
Multiplying \eqref{rrh} by $h'$ and
integrating from $\rho$ to $R$ we obtain
\begin{equation} \label{cRrho}
 c(R)-c(\rho)=-\frac12\int_\rho^Rh'(r)^2r^3\,dr<0,
\end{equation}
where
\begin{equation} \label{cb}
c(r):=\frac{r^2}2 h'(r)^2+b(h(r)),
\qquad b(\xi):=\alpha^2\Bigl(\frac1{p+1}\xi^{p+1}-\frac12 \xi^2\Bigr).
\end{equation}
Set $c_0:=\lim_{r\to0}c(r)$, $c_\infty:=\lim_{r\to\infty}c(r)$.
Then $c_0>c_\infty$.

We will first prove that
\begin{equation} \label{asymp1}
r(rh'(r))'\to0 \hbox{ as }r\to0.
\end{equation}
In fact, the boundedness of $rh'(r)$ for $r\leq1$ shows 
the existence of $r_k\to0$ such that
$r_k(r_kh'(r_k))'\to0$, hence $d(h(r_k))\to0$
due to \eqref{esth}, \eqref{rrh},
and we may assume that either $h(r_k)\to0$ or $h(r_k)\to1$.
Assume that there exist $\tilde r_k\to0$ such that
the sequence $\eta_k:=\tilde r_k(\tilde r_kh'(\tilde r_k))'$
satisfies
$|\eta_k|\geq2\eps_0>0$.
Then $|d(h(\tilde r_k))|\geq\eps_0$ for $k$ large,
hence $|h(\tilde r_k)-h(r_k)|\geq\delta_0>0$
and, consequently, 
there exist $\hat r_k\to0$ such that $h'(\hat r_k)=0$
and $\liminf h(\hat r_k)\leq 1$.
Hence 
$c_0=\lim c(\hat r_k)=\lim b(h(\hat r_k))\leq0$.

If (for a suitable subsequence) $\eta_k\leq-2\eps_0$
then $d(h(\tilde r_k))\geq\eps_0$,
hence $h(\tilde r_k)>1$ and
$z(h-1)<\infty$ implies $h(r)\geq1$ for $r$ small.
Consequently, $h(\hat r_k)\to1$,
hence $c_0=\lim b(h(\hat r_k))=\min b\leq c_\infty$,
which yields a contradiction.

Consequently, $\eta_k\geq2\eps_0$
and $d(h(\tilde r_k))\leq-\eps_0$,
hence $h(\tilde r_k)<1$ and 
$z(h-1)<\infty$ implies $h(r)\leq1$ for $r$ small.

If $h(r_k)\to1$ then we may assume $h(\hat r_k)\to1$
and we obtain a contradiction as above.

Therefore $h(r_k)\to0$. Since 
$|h(\tilde r_k)-h(r_k)|\geq\delta_0>0$ and $h>0$, we have
$h(\tilde r_k)\geq\delta_0$ for $k$ large enough.
Consequently, we can find $\hat r_k^{(i)}\to0$, $i=1,2$, such that
$h'(r_k^{(i)})=0$, $h(r_k^{(1)})\to0$, $h(r_k^{(2)})\in[\delta_0,1]$.
However, this contradicts the fact that 
$b(h(r_k^{(i)}))\to c_0$, $i=1,2$.
This shows that \eqref{asymp1} is true.

Since \eqref{asymp1} guarantees $d(h(r))\to0$ as $r\to0$,
we have either $h(r)\to0$ or $h(r)\to1$ as $r\to0$.
If $h(r)\to1$ as $r\to0$ then 
$c_0\leq\lim_{r\to0} b(h(r))=\min b$,
which yields a contradiction.
Hence $h(r)\to0$ and $b(h(r))\to0$ as $r\to0$.
Since $h$ is bounded on $(0,1)$, we can find $r_k\to0$ such that
$r_kh'(r_k)\to0$, hence $c_0=\lim b(h(r_k))=0$
and, consequently, 
\eqref{rdhr} is true.

Next assume $p>p_S$. 
Then  \eqref{rdhr} follows by
a straightforward modification of the proofs
of \cite[Lemmas~2.1--2.2]{SZ94}.  
In fact, multiplying \eqref{eqv1} by $v'(r)r^2$ and integrating
one obtains
\begin{equation} \label{eqv1int}
\left\{\quad\begin{aligned}
&\frac{v'^2r^2}2\Big|_\rho^R+(n-2-2\alpha)\int_\rho^Rv'^2r\,dr \\
&\qquad\qquad +\frac12\int_\rho^R v'^2r^3\,dr
 +\Bigl(\frac{v^{p+1}}{p+1}-\frac{{\L}^{p-1}v^2}2\Bigr)\Big|_\rho^R=0,
\end{aligned}\right.
\end{equation}
hence \eqref{estv} guarantees
\begin{equation} \label{estvint}
\int_\rho^Rv'^2r\,dr\leq C\ \hbox{ whenever }1\geq R>\rho>0.
\end{equation}
Assume that there exist $r_k\to0$ such that
$|v'(r_k)r_k|\geq c>0$.
Since $|(v'^2r^2)'|\leq M/r$ for $r\leq1$ due to \eqref{estv},
we have
$$ |v'^2r^2-v'(r_k)^2r_k^2|\leq M|r-r_k|\max\Bigl(\frac1r,\frac1{r_k}\Bigr),
\quad r\in(0,1),$$
hence, assuming $(1+\eps)r_k<1$, we obtain
$$v'^2r^2\geq\frac{c^2}2,\qquad
 r\in[(1+\eps)^{-1}r_k,(1+\eps)r_k],\qquad
 \eps=\frac{c^2}{2M}.$$ 
Choosing an infinite subsequence, if necessary, of $\{r_k\}$
so that the intervals above are disjoint and contained in $(0,1)$, it follows that
$$
\begin{aligned}
 \int_0^1 v'^2r\,dr 
 &\geq \sum_{k=1}^\infty \int_{(1+\eps)^{-1}r_k}^{(1+\eps)r_k}v'^2r\,dr \\
 &\geq \sum_{k=1}^\infty \int_{(1+\eps)^{-1}r_k}^{(1+\eps)r_k}\frac{c^2}{2r}\,dr 
 = c^2 \sum_{k=1}^\infty \ln(1+\eps)=\infty,
\end{aligned}
$$
which contradicts \eqref{estvint}.
Consequently \eqref{rdhr} is true.
\end{proof}

\begin{lemma} \label{lem-v0cp}
Let $p>p_S$, let $w$ be a positive solution of \eqref{eqw},
$v(r)=r^\alpha w(r)$, where $\alpha=2/(p-1)$.
Then there exists 
\begin{equation} \label{h0}
v_0:=\lim_{r\to0}v(r).
\end{equation}
If $v_0\ne0$ then $v_0={\L}$.
\end{lemma}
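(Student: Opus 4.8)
The plan is to carry out the energy argument of \cite{SZ94}, adapted to the perturbed equation, after passing to the logarithmic variable $s=\log r$. By Lemma~\ref{lem-ub}, $v$ is bounded on $(0,1)$ and satisfies \eqref{estv}; by Lemma~\ref{lem-rdhr} (which for $p>p_S$ requires no additional hypothesis) we have $rv'(r)\to0$ as $r\to0$. Together with the integrated identity \eqref{eqv1int} these are the only ingredients I need.

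The first step is to introduce the energy
$$ E(r):=\tfrac12 r^2v'(r)^2+G(v(r)),\qquad
   G(\xi):=\frac{\xi^{p+1}}{p+1}-\frac{{\L}^{p-1}}{2}\xi^2, $$
and to read off from \eqref{eqv1int} that
$$ E(R)-E(\rho)=-(n-2-2\alpha)\int_\rho^R v'^2r\,dr-\tfrac12\int_\rho^R v'^2r^3\,dr,
   \qquad 0<\rho<R\le1. $$
Here the hypothesis $p>p_S$ enters decisively: it is equivalent to $n-2-2\alpha=\beta>0$ (recall $\beta=\frac1{p-1}((n-2)p-(n+2))$), so the right-hand side is $\le0$ and $E$ is non-increasing on $(0,1]$. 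Since $v$ is bounded and $rv'\to0$, $E$ is bounded as well, hence $E_0:=\lim_{r\to0}E(r)$ exists and is finite; using $rv'\to0$ once more yields $G(v(r))\to E_0$.

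The crux is to upgrade this to convergence of $v$ itself. Here I would use the shape of the potential: $G(0)=0$ and $G'(\xi)=\xi(\xi^{p-1}-{\L}^{p-1})$, so $G$ is strictly decreasing on $(0,{\L})$ and strictly increasing on $({\L},\infty)$, with a unique minimum $G({\L})<0$. Both $\liminf_{r\to0}v$ and $\limsup_{r\to0}v$ lie in the level set $\{G=E_0\}$; were they distinct, strict monotonicity of $G$ on each side of ${\L}$ would force them to straddle ${\L}$, so by the intermediate value theorem $v(r_k)={\L}$ for some $r_k\to0$, giving $G(v(r_k))=G({\L})<E_0$ and contradicting $G(v(r))\to E_0$. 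Hence oscillation is impossible and $v_0:=\lim_{r\to0}v(r)$ exists. This is the step I expect to be the main obstacle: the energy controls only $G(v)$, and it is the gradient-like monotonicity together with the fact that the two level branches of $G$ are separated by the strictly smaller value $G({\L})$ that rules out oscillation.

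It remains to identify $v_0$. Writing the equation in the variable $s=\log r$ (so $\dot v=rv'$ and $\ddot v=d^2v/ds^2$) as
$$ \ddot v=-\beta\dot v-\bigl(v^p-{\L}^{p-1}v\bigr)-\tfrac12e^{2s}\dot v, $$
and letting $r\to0$, we have $\dot v\to0$, $v\to v_0$ and $e^{2s}=r^2\to0$, whence $\ddot v\to-(v_0^p-{\L}^{p-1}v_0)=:c$. If $c\ne0$, integrating $\ddot v$ backwards would force $\dot v\to\mp\infty$ as $s\to-\infty$, contradicting $\dot v\to0$; therefore $c=0$, i.e.\ $v_0(v_0^{p-1}-{\L}^{p-1})=0$, so $v_0\in\{0,{\L}\}$. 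In particular $v_0\ne0$ implies $v_0={\L}$, as claimed.
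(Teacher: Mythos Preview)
Your proof is correct and follows essentially the same route as the paper's. Both arguments use the integrated identity \eqref{eqv1int} together with $rv'(r)\to0$ to obtain convergence of the potential $G(v(r))$ (the paper's $a(v(r))$), then exploit the unimodal shape of $G$ to conclude that the connected limit set of $v$ must be a single point, and finally read off from the equation that this limit lies in $\{0,L\}$; your use of the logarithmic variable $s=\log r$ in the last step is merely a cosmetic reformulation of the paper's computation $r^2v''\to c\ne0\Rightarrow rv'\to -c$.
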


\begin{proof}
Set 
$$a(\xi):=\frac1{p+1}\xi^{p+1}-\frac{{\L}^{p-1}}2 \xi^2.$$
Then passing to the limit as $\rho\to0$ in \eqref{eqv1int}
and using \eqref{rdhr}
we obtain the existence of $\lim_{r\to0}a(v(r))$.
Therefore, the limit set of $v(r)$ as $r\to0$ is disconnected,
hence there exists $v_0=\lim_{r\to0}v(r)$.

Assume on the contrary $v_0\notin\{0,{\L}\}$.
Since $v'(r)r\to0$ as $r\to0$, 
\eqref{eqv1} guarantees $v''(r)r^2\to c\ne0$ as $r\to0$.
Then integrating we obtain $v'(r)=-(c+o(1))/r$ as $r\to0$,
which yields a contradiction.
\end{proof}

\begin{proof}[Proof of Theorem~\ref{thm-uniq}]

(i) Let us first consider equation \eqref{eqw}.

If $p=p_S$ then the statement follows from Lemmas~\ref{lem-rdhr} and~\ref{lem-v0}.

Assume $p>p_S$. 
We proceed as in the proof of \cite[Theorem 3.1]{SZ94}. 
Set $v(r)=r^\alpha w(r)$, where $\alpha=2/(p-1)$.
Lemmas~\ref{lem-v0}, \ref{lem-rdhr} and \ref{lem-v0cp}
imply $v(r)\to {\L}$ as $r\to0$.
Set
$u(r):=v(r)-{\L}$.
Then $u(r)\to0$ and $ru'(r)\to0$ as $r\to0$ and
\begin{equation} \label{equ}
u''+\frac{n-1-2\alpha}r u'+\frac r2 u'+\frac{2(n-2-\alpha)}{r^2}u
+\frac {f(r)}{r^2}=0,
\end{equation} 
where, for $r$ small,
$$
f(r)=({\L}+v(r))^p-{\L}^p-p{\L}^{p-1}v(r)
 ={\L}^p\sum_{k=2}^\infty\frac{p(p-1)\dots(p-k+1)}{k!}\Bigl(\frac{u(r)}{{\L}}\Bigr)^k.$$
Multiplying \eqref{equ} by $r^2u'$ and integrating from $\rho$ to $R$, $R$ small, we obtain
\begin{equation} \label{equint}
\left\{\quad\begin{aligned}
&\Bigl(\frac{r^2u'^2}2+(n-2-\alpha)u^2\Bigr)\Big|_\rho^R 
 +(n-2-2\alpha)\int_\rho^R ru'^2\,dr\\
&\qquad\qquad +\frac12\int_\rho^R r^3u'^2\,dr
+\int_\rho^R f(r)u'\,dr=0.
\end{aligned}\right.
\end{equation}
Since
$$ \begin{aligned}
 \int_\rho^R f(r)u'\,dr 
&= {\L}^{p+1}\sum_{k=3}^\infty\frac{p(p-1)\dots(p-k+2)}{k!}\Bigl(\frac{u(r)}{{\L}}\Bigr)^k\Big|_\rho^R \\
&= {\L}^{p+1}\sum_{k=3}^\infty\frac{p(p-1)\dots(p-k+2)}{k!}
   \Bigl[\Bigl(\frac{u(R)}{{\L}}\Bigr)^k-\Bigl(\frac{u(\rho)}{{\L}}\Bigr)^k\Bigr],
\end{aligned}
$$   
letting $\rho\to0$ in \eqref{equint} we obtain
$$ R^2u'(R)^2+2(n-2-\alpha)u^2(R)\leq M|u(R)|^3,$$
hence $u(R)=0$ for $R$ small.
Consequently, $u\equiv0$.

(ii) Next consider equation \eqref{eqwb}.
Assume that $w$ is an unbounded positive solution of \eqref{eqwb}.
The same arguments as in the proof of Lemma~\ref{lem-ub}
show that \eqref{boundw} is true, 
and the proof of \cite[Lemma~2.1]{Mi09} shows that $w$  
is bounded for $r\geq1$, hence
\begin{equation} \label{estwMM}
w(r)\leq C(1+r^{-2/(p-1)}),\quad r>0.
\end{equation}
If $p>p_S$ then the assertion follows from 
\cite[Theorem~1.2]{Mi10} or
\cite[Proposition~A.1]{MM04}.
If $p=p_S$ then one can modify the proofs of
Lemmas~\ref{lem-v0}--\ref{lem-v0cp} and the arguments in (i) 
to prove that \eqref{estwMM} guarantees $w=U_*$.
In fact, the function $v(r)=w(r)r^\alpha$ now solves the equation
\begin{equation} \label{eqv3}
v''+\Bigl(\frac{n-1-2\alpha}r-\frac r2\Bigr)v'+\frac1{r^2}(v^p-{\L}^{p-1}v)=0,
\qquad r>0,
\end{equation}
instead of \eqref{eqv1},
and the only nontrivial change is in the proof of Lemma~\ref{lem-rdhr}
for $p=p_S$:
Since \eqref{cRrho} reads now
\begin{equation} \label{cRrhoBack}
 c(R)-c(\rho)=\frac12\int_\rho^Rh'(r)^2r^3\,dr>0,
\end{equation}
we have $c_0<c_\infty$ and the property
$c_0=\min b$ does not yield an immediate contradiction.
However, if $c_0=\min b$ then \eqref{cb} implies $h(r)\to1$ and $rh'(r)\to 0$
as $r\to0$, and \eqref{cRrhoBack}, \eqref{cb} imply
\begin{equation} \label{hR}
 \int_0^Rh'(r)^2r^3\,dr=2(c(R)-c_0)\geq R^2h'(R)^2.
\end{equation}
Consider $R\in(0,1)$. The estimate $|rh'(r)|\leq C_h$ for $r\in(0,1)$
and \eqref{hR} guarantee $|h'(R)|\leq C_h$.
Using a bootstrap argument in \eqref{hR} we obtain $|h'(R)|\leq C_hR^k$,
$k=1,2,\dots$, hence $h'\equiv0$.
\end{proof}

\vskip3mm
{\bf Acknowledgements.}
The author was supported in part by the Slovak Research and Development Agency 
under the contract No. APVV-14-0378 and by VEGA grant 1/0319/15.

\def\by{\relax}    
\def\paper{\relax} 
\def\jour{\textit} 
\def\vol{\textbf}  
\def\yr#1{\rm(#1)} 
\def\pages{\relax} 
\def\book{\textit} 
\def\inbook{In: \textit}
\def\finalinfo{\rm} 

\bibliographystyle{amsplain}

\end{document}